\numberwithin{equation}{section}
\newtheorem{thm}{Theorem}[section]
\newtheorem{theorem}[thm]{Theorem}
\newtheorem{lemma}[thm]{Lemma}
\newtheorem{corollary}[thm]{Corollary}
\theoremstyle{definition}
\newtheorem*{theorem*}{Theorem}
\newtheorem{remark}[thm]{Remark}
\newtheorem{example}[thm]{Example}
\newtheorem{defn-thm}[thm]{Definition-Theorem}
\newcommand{\sF}{{\mathcal F}}
\newcommand{\sL}{{\mathcal L}}
\newcommand{\N}{{\mathbb N}}
\newcommand{\End}{{ End}}
\newcommand{\Hom}{{ Hom}}
\renewcommand{\bar}{\overline}
\renewcommand{\phi}{\varphi}
\newcommand{\ee}{\end{eqnarray*}}
\newcommand{\be}{\begin{eqnarray*}}
\newcommand{\beq}{\begin{equation}}
\newcommand{\eeq}{\end{equation}}
\newcommand{\bd}{\begin{enumerate}}
\newcommand{\ed}{\end{enumerate}}
\renewcommand{\hat}{\widehat}
\def\ZZ{{\mathbb{Z}}}
\def\QQ{{\mathbb{Q}}}
\def\CC{{\mathbb{C}}}
\def\caL{{\mathcal{L}}}
\def\cO{{\mathcal{O}}}
\def\cA{{\mathcal{A}}}
\DeclareMathOperator{\degi}{deg}
\DeclareMathOperator{\Pic}{Pic}
\DeclareMathOperator{\Endo}{End}
\DeclareMathOperator{\Ima}{Im}
\DeclareMathOperator{\diag}{diag}
\begin{document}

\title{A note on dual abelian varieties}
\author[1]{Aleksandra Bor\'owka}
\address[1]{Institute of Mathematics, Jagiellonian University in Krak\'ow\\Poland}
\email{aleksandra.borowka@uj.edu.pl}

\author[2]{Pawe\l{} Bor\'owka}
\address[2]{Institute of Mathematics, Jagiellonian University in Krak\'ow\\Poland}
\email{pawel.borowka@uj.edu.pl}

\subjclass[2020]{14K02,14K12}
\keywords{dual abelian variety, isogeny, isomorphism, submaximal Picard number}
\maketitle

\begin{abstract}
For any non-principal polarisation $D$, we explicitly construct $D$-polarised abelian variety $A$, such that its dual abelian variety is not (abstractly) isomorphic to $A$. For $\dim(A)>3$ the construction includes examples with submaximal Picard number equal to $(\dim(A)-1)^2+1$. As a corollary, we show that a very general non-principally polarised abelian variety is not isomorphic to its dual. Moreover, we show an example of an abelian variety that is isomorphic to its dual, yet it does not admit a principal polarisation.
\end{abstract}

\section{Introduction}
An $n$-dimensional complex torus $A$ is called an abelian variety if it is equipped with an ample line bundle, denoted $\sL$. Its Neron-Severi class $c_1(\caL)$ is called a polarisation and when we fix it, we call the variety polarised. By \cite[Prop. 2.5.3 and 2.5.4]{BL} we can think of a polarisation as a choice of an ample line bundle up to translation, hence we will abuse the notation by saying that $\sL$ is a polarisation. Having $\sL$ we can consider a polarising isogeny, i.e. the map (that depends only on the Neron-Severi class)
$\varphi_{\sL}:A\to\Pic^0{A}$ given by $\varphi_{\sL}(x)=t^*_x(\sL)\otimes \sL^{-1}$, where $t_x$ is translation by $x$.
Since its kernel is a finite abelian group, one can prove that it is isomorphic to $(\ZZ_{d_1}\times\ldots\times\ZZ_{d_n})^2$ where $d_i$ are positive integers satisfying $d_i|d_{i+1},\ i=1,\ldots,n-1$. The sequence $(d_1,\ldots,d_n)$ is called the type of polarisation and one can always find an ample line bundle for which $d_1=1$. 
Using the fact that $\Pic^0(A)=\hat{A}$ is the dual abelian variety, one sees that if $\sL$ is principal (i.e. of type $(1,\ldots,1)$) then we have an isomorphism $A\cong \hat{A}$.

A usual elementary example of an abelian variety is an elliptic curve $E$. One easily checks that for any point $P\in E$ we have that  the class of $\cO_E(P)$ is principal and hence $E\cong \hat{E}$ for any abelian variety of dimension 1.  
Then, a natural problem arises, namely to find abelian varieties that are not principally polarised and are not isomorphic to their duals. We have two kind of easy solutions to the problem. Both of them start with the existence of an abelian variety such that its endomorphism ring $\End(A)$ and Neron-Severi group $NS(A)$ are isomorphic to $\ZZ$. Then, one possibility is to show that dual abelian variety has a natural polarisation of dual type, i.e. of type $(\frac{d_1d_n}{d_n},\frac{d_1d_n}{d_2}\ldots,\frac{d_1d_n}{d_1})$ \cite[Rem 5.2.9]{BL}. In particular, if $A$ is of type $(1,1,2)$, then $\hat{A}$ is of type $(1,2,2)$ hence (as $NS(A)$ is small) not isomorphic to $A$, see Theorem \ref{vgthm} for details. 

A second solution, that can be deduced from \cite[11.24]{vdg}, is to take a principally polarised abelian surface $A$ with $\End(A)=\ZZ$, a prime number $p$, a $p$-torsion point $P$ and $\left<P\right>$ the group generated by $P$, and consider the surface $B=A/\left<P\right>$ with the quotient map $\pi: A\to B$ that is of degree $p$. Then, the authors of \cite{vdg} show that $B$ does not admit a principal polarisation and we will adjust the construction to show non-existence of an isomorphism between $B$ and $\hat{B}$.
To see this, let us assume, by contrary, that there is an isomorphism $\phi:B\to\hat{B}$ and consider the following composition (where $\hat{\pi}$ is the dual homomorphism to $\pi$):
\begin{equation} \label{polpol} 
\xymatrix@R=1cm@C=1.5cm{
 A \ar[r]^{\pi}  & B  \ar[r]^{\phi} &\hat{B}  \ar[r]^{\hat{\pi}}&\hat{A}\simeq A
}
\end{equation}

Then $\hat{\pi}\circ\phi\circ\pi\in \End(A)$ and by computing degrees, we have that $\degi(\hat{\pi}\circ\phi\circ\pi) =p\cdot1\cdot p=p^2$ which gives a contradiction because $\End(A)=\ZZ$, and the degree of multiplication by $k\in \N$ is equal to $k^4$.

Both arguments show examples in some non-principal polarisation types, but not include all of them and they consider very general abelian varieties, i.e., one assumes that the Picard number of a variety is equal to $1$.

On the other extreme, if the Picard number is largest possible i.e. equal to $(\dim(A))^2$ then one shows that A is isomorphic to the product of (pairwise isogenous) elliptic curves with complex multiplication, hence it is always (abstractly) isomorphic to its dual abelian variety, see \cite[Thm 4.1]{HL} or \cite[Ex 5.6.10]{BL}.

The main aim of this note is to construct examples of abelian varieties in any dimension and any (non-principal) polarisation type that are not general and still not isomorphic to their duals. The result follows from Theorem \ref{mainthm} 
\begin{theorem*}[Thm 3.2]\label{mthm}
Let $(1,d_2,\ldots,d_n)$ be a sequence of positive integers satisfying $d_i|d_{i+1}$ for $i=2,\ldots,n-1$ and with $d_n>1$.
    Let $E$ be an elliptic curve such that there exists a primitive $d_n$-torsion $P\in E[d_n]$ such that $E/\left<P\right>$ is not isomorphic to $E$. Let $(B,\sL_B)$ be a $(d_2,\ldots,d_n)$ polarised abelian variety, such that $\Hom(E,B)=0$. Let $Q\in \ker\varphi_{\sL_B}\cap B[d_n]$ be a primitive $d_n$-torsion point.
    Then the quotient $A=(E\times B)/\left<(P,Q)\right>$ is a $(1,d_2,\ldots,d_n)$ polarised abelian variety such that $\hat{A}$ is not isomorphic to $A$.    
\end{theorem*}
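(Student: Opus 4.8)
The plan is to first equip $A$ with a polarisation of the asserted type, and then to distinguish $A$ from $\hat A$ by a single isogeny invariant: the elliptic curve sitting inside each of them as its $E$-isotypic part. For the polarisation, I would fix on $E$ a polarisation $L_E$ of type $(d_n)$, so that $\Ker\varphi_{L_E}=E[d_n]\ni P$, and put the product polarisation $\sL=p_E^\ast L_E\otimes p_B^\ast\sL_B$ on $E\times B$, where $p_E,p_B$ denote the two projections. Then $\Ker\varphi_{\sL}=E[d_n]\times\Ker\varphi_{\sL_B}$ contains $(P,Q)$, since $P\in E[d_n]$ and $Q\in\Ker\varphi_{\sL_B}$; hence $H=\langle(P,Q)\rangle\subseteq\Ker\varphi_{\sL}$. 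As $H$ is cyclic and the associated Weil pairing $e^{\sL}$ on $\Ker\varphi_{\sL}$ is alternating, $H$ is isotropic, so $\sL$ descends along $\pi\colon E\times B\to A=(E\times B)/H$ to a polarisation $\sL_A$ with $\pi^\ast\sL_A=\sL$. The type of $\sL$ is $(d_2,\ldots,d_n,d_n)$, so the product of the type of $\sL_A$ is $(d_2\cdots d_n\cdot d_n)/\deg\pi=d_2\cdots d_n$; and computing the form induced by $e^{\sL}$ on $\Ker\varphi_{\sL_A}=H^{\perp}/H$, with $H^{\perp}$ the orthogonal of $H$ in $\Ker\varphi_{\sL}$, shows that the type is exactly $(1,d_2,\ldots,d_n)$.

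Next I would record the sub/quotient structure. The composite $E=E\times\{0\}\hookrightarrow E\times B\xrightarrow{\pi}A$ is injective: if $(x,0)=k(P,Q)\in H$ then $kQ=0$, forcing $d_n\mid k$ and hence $x=kP=0$. Write $\iota_E\colon E\hookrightarrow A$ for the resulting embedding. On the other hand the composite $E\times B\xrightarrow{p_E}E\xrightarrow{q}E'$, with $E':=E/\langle P\rangle$, kills $H$, so it descends to a surjection $\rho\colon A\twoheadrightarrow E'$. Dualising these, and using that an elliptic curve is canonically isomorphic to its dual, we obtain a surjection $\hat\iota_E\colon\hat A\twoheadrightarrow E$ and an embedding $\hat\rho\colon E'\hookrightarrow\hat A$.

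The decisive step is the isotypic invariant. Because $E$ is simple and $\Hom(E,B)=0$, the curve $E$ is isogenous to no simple factor of $B$; since $A$ is isogenous to $E\times B$, the $E$-isotypic abelian subvariety of $A$, namely the sum $A_E$ of the images of all homomorphisms $E\to A$, is one-dimensional. It contains $\iota_E(E)$, so $A_E=\iota_E(E)\cong E$. The same reasoning applies to $\hat A$, which is isogenous to $E\times\hat B$ with $\Hom(E,\hat B)=0$ (as $\hat B$ is isogenous to $B$): the one-dimensional $E$-isotypic subvariety $(\hat A)_E$ contains $\hat\rho(E')\cong E'$, so $(\hat A)_E\cong E'=E/\langle P\rangle$. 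Any isomorphism $A\cong\hat A$ must carry $A_E$ isomorphically onto $(\hat A)_E$, yielding $E\cong E/\langle P\rangle$ and contradicting the hypothesis; therefore $\hat A\not\cong A$.

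The heart of the argument, and the point I expect to need the most care to phrase, is the observation that Poincar\'e duality interchanges the elliptic \emph{subvariety} $E$ of $A$ with the elliptic \emph{quotient} $E/\langle P\rangle$ of $A$: under the hypothesis $\Hom(E,B)=0$ these two elliptic curves are the one-dimensional $E$-isotypic parts of $A$ and of $\hat A$ respectively, hence canonical, hence forced to agree under a self-duality. Note that this is exactly where the large Picard number is accommodated, since no assumption on $\End(A)$ is used; the only genuinely computational point is the verification that the descended polarisation $\sL_A$ has type precisely $(1,d_2,\ldots,d_n)$.
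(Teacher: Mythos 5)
Your proposal is correct in substance and follows the same core strategy as the paper: descend the product polarisation along the isotropic cyclic subgroup $\langle(P,Q)\rangle$, then distinguish $A$ from $\hat{A}$ by the isomorphism class of the unique elliptic curve isogenous to $E$ that each contains. The differences are tactical. For uniqueness, the paper's Lemma 3.1 uses norm endomorphisms ($Nm_B$ restricted to any $F\subset A$ isogenous to $E$ vanishes, forcing $F\subset E$), while you use the $E$-isotypic part and, in effect, the computation $\Hom(E,A)\otimes\Q\cong\End(E)\otimes\Q$; these are equivalent in content, though you need the statement for both $A$ and $\hat{A}$, whereas the paper applies it only to $A$. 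To place $E/\langle P\rangle$ inside $\hat{A}$, the paper uses the polarising isogeny, computing $\ker\varphi_{\sL_A}\cap E=\langle P\rangle$ so that $\varphi_{\sL_A}(E)=E/\langle P\rangle\subset\hat{A}$; you instead dualise the quotient map $\rho\colon A\to E'=E/\langle P\rangle$.

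That dualisation is the one genuine gap. The dual of a surjection of abelian varieties is \emph{not} in general injective --- the dual of $[n]\colon E\to E$ is again $[n]$ --- rather, $\hat{\rho}$ is an embedding if and only if $\ker\rho$ is connected. Here that is true but must be checked: if $\rho(\pi(x,b))=0$ then $x=kP$ for some $k$, and $(kP,b)\equiv(0,b-kQ)$ modulo $\langle(P,Q)\rangle$, so $\ker\rho=\pi(\{0\}\times B)$, which is connected (indeed isomorphic to $B$, since $(\{0\}\times B)\cap\langle(P,Q)\rangle=0$ because $P$ has order exactly $d_n$). The point is load-bearing: without injectivity you would only know that the $E$-isotypic part of $\hat{A}$ is \emph{isogenous} to $E'$, which gives no contradiction, since a priori a quotient $E'/K$ by a finite subgroup could be isomorphic to $E$. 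With this verification added (or by replacing your $\hat{\rho}$ with the paper's $\varphi_{\sL_A}(E)$, which is automatically an abelian subvariety isomorphic to $E/(\ker\varphi_{\sL_A}\cap E)$), the proof is complete; your determination of the type of $\sL_A$ is asserted at the same level of detail as in the paper, which likewise delegates it to the description $\ker\varphi_{\sL_A}=\pi((\ker\pi)^{\perp})$.
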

The idea of the proof is to generalise a recent argument developed in \cite[\S 2]{BS}. As applications of the theorem we show explicit examples of abelian varieties not isomorphic to their duals constructed in Section \ref{SecExamples} and we show the following theorem.
\begin{theorem*}[Thm \ref{vgthm}]
    Let $A$ be a very general non-principally polarised abelian variety. Then $A$ is not isomorphic to its dual $\hat{A}$.    
\end{theorem*}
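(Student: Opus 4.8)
The plan is to reduce everything to the two genericity properties enjoyed by a very general abelian variety: for such $A$ one has $\End(A)\cong\ZZ$ and Picard number $\rho(A)=1$, so $NS(A)\cong\ZZ$. Both conditions fail only on a countable union of proper closed subvarieties of the relevant moduli space (the Noether–Lefschetz type loci and the loci with extra endomorphisms), so a very general $A$ avoids all of them. Normalising the polarisation so that its type is $(1,d_2,\ldots,d_n)$, as recalled in the introduction, the class of $\sL$ is a primitive element of $NS(A)$: if $\sL=k\sL'$ in $NS(A)$ then the type scales, giving $d_1=k\,d_1(\sL')\ge k$, so $d_1=1$ forces $k=1$. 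Hence $\sL$ generates $NS(A)=\ZZ\cdot\sL$, and the hypothesis that $A$ is non-principally polarised means exactly that $d_n>1$.

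I would then argue by contradiction. Suppose $\psi\colon A\to\hat A$ is an isomorphism. Composing with the polarising isogeny gives $\psi^{-1}\circ\varphi_{\sL}\in\End(A)=\ZZ$, say $\psi^{-1}\circ\varphi_{\sL}=[a]$, multiplication by a nonzero integer $a$ (nonzero because $\varphi_{\sL}$ is an isogeny). Thus $\varphi_{\sL}=\psi\circ[a]$, and since $\psi$ is an isomorphism the kernel of $\varphi_{\sL}$ coincides with the kernel of multiplication by $a$, namely $A[a]\cong(\ZZ_a)^{2n}$.

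The crux is to compare this with the known structure of the kernel. By the description recalled in the introduction $\ker\varphi_{\sL}\cong(\ZZ_{d_1}\times\cdots\times\ZZ_{d_n})^2$, so one obtains an isomorphism of finite abelian groups $(\ZZ_a)^{2n}\cong(\ZZ_{d_1}\times\cdots\times\ZZ_{d_n})^2$. It is essential to use the full group structure and not merely the orders: comparing orders alone only gives $a^n=d_1\cdots d_n$, which for a self-dual type such as $(1,2,4)$ is no contradiction. Invoking uniqueness of invariant factors, the right-hand side has invariant factors $d_1,d_1,d_2,d_2,\ldots,d_n,d_n$ (this is a valid chain since $d_1\mid d_2\mid\cdots\mid d_n$), while the left-hand side has all $2n$ invariant factors equal to $a$; matching them forces $d_1=\cdots=d_n=a$. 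As $d_1=1$ this gives $a=1$ and $d_n=1$, contradicting $d_n>1$.

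I expect the two delicate points to be the genericity input and the passage from comparing orders to comparing group structure; the latter is precisely what makes the argument uniform over all non-principal types, including the self-dual ones that the dual-type argument sketched in the introduction cannot reach. As an alternative that I would keep in reserve, note that $\varphi_{\sL}=\psi\circ[a]$ forces $\psi=\tfrac1a\varphi_{\sL}$ in $\Hom(A,\hat A)\otimes\QQ$ to be symmetric, hence $\psi=\varphi_{\sM}$ for some $\sM\in NS(A)=\ZZ\cdot\sL$; an isomorphism of this shape would make $\sM$ a principal polarisation, which is impossible since every nonzero multiple of $\sL$ has type $|k|\cdot(1,d_2,\ldots,d_n)\neq(1,\ldots,1)$.
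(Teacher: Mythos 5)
Your proof is correct, but it takes a genuinely different route from the paper's. The paper argues in two steps: it first shows that a very general $A$ is not \emph{polarised}-isomorphic to its dual --- immediately when the type $D$ differs from the dual type $\hat{D}$, and, when $D=\hat{D}$, by observing that the dualisation morphism $\Psi\colon\cA_g^D\to\cA_g^{\hat{D}}$ is not the identity, a fact it deduces from the explicit examples produced by Theorem \ref{mainthm}; it then upgrades ``not polarised-isomorphic'' to ``not abstractly isomorphic'' via the Picard-rank-one pullback trick. Your argument bypasses the moduli space and Theorem \ref{mainthm} entirely: from $\End(A)=\ZZ$ you write any putative isomorphism $\psi\colon A\to\hat{A}$ as $\varphi_{\sL}=\psi\circ[a]$ with $a\neq 0$, so that $\ker\varphi_{\sL}=A[a]\cong(\ZZ_a)^{2n}$, and uniqueness of invariant factors, compared against $\ker\varphi_{\sL}\cong(\ZZ_{d_1}\times\cdots\times\ZZ_{d_n})^2$, forces $d_1=\cdots=d_n=a$, whence $a=d_1=1$ and $d_n=1$, contradicting non-principality. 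Your route is more elementary and self-contained (no dependence on the paper's main construction or on properties of the coarse moduli space), it treats all non-principal types uniformly --- including the self-dual ones such as $(1,d)$ and $(1,2,4)$, where the dual-type comparison sketched in the introduction gives nothing --- and you correctly identify that comparing kernel orders alone would fail (for type $(1,2,4)$ both groups have order $64$), so the passage to group structure is the essential point. What the paper's route buys in exchange is the stronger intermediate statement that dualisation is not the identity morphism on $\cA_g^D$, i.e.\ a very general $A$ is not even polarised-isomorphic to $\hat{A}$, and a proof that ties the theorem to the explicit examples constructed in the paper. One small remark: your main argument uses only $\End(A)=\ZZ$ plus the standard kernel structure; the $NS(A)=\ZZ$ input and the primitivity of $\sL$ are needed only for your ``reserve'' variant, which is essentially the paper's second step.
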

The idea of the proof is to show that dualisation is never an identity on the moduli space of non-principally polarised abelian varieties, hence a general abelian surface is not polarised isomorphic to its dual (i.e. there does not exist an isomorphism that preserves polarisations).  Then, assuming again that an abelian surface is very general, one gets that a pullback of a polarisation by a possible abstract isomorphism would need to be a polarisation, hence an isomorphism would be in fact polarised isomorphism that gives a contradiction. Note that this result has been proved for surfaces in \cite[Thm 1.5]{GH}, but up to our knowledge, it has not been described in the literature in full generality yet.

Note that for $n>3$ in \cite{HL} it has been shown that there is a gap in realizable Picard numbers, namely there are no abelian varieties with Picard number greater than $(\dim(A)-1)^2+1$ and less than $(\dim(A))^2$, hence the former number can be called a submaximal Picard number. In this context, we would like to point out that our construction includes examples of varieties with submaximal Picard number, see Example \ref{ex1}. 

Having the constructions described above we see that abelian varieties not isomorphic to their duals are quite common for non-principal type of polarisation. Moreover, they may have different Picard numbers, with the exception of the maximal one, where the variety is principally polarisable. Therefore, one could ask whether the existence of an isomorphism between an abelian variety $A$ and its dual $\hat{A}$ means that there exists a principal polarisation on $A$. In Section \ref{1313} we show that this is not true. Namely, in Example \ref{ex3}, we show that for 
 a very general $(1,3)$ polarised abelian surface $S$, the product
     $A=S\times\hat{S}$ does not admit a principal polarisation, yet it is isomorphic to its dual by switching the factors.

The paper is organised as follows. In Section \ref{2} we give a necessary background about abelian varieties. In Section \ref{SecMain} we provide a necessary result for construction of explicit examples of dual abelian varieties in Section \ref{SecExamples}. Finally in Section \ref{1313}, we construct a family of examples of dual abelian varieties which are not principally polarisable.

\subsection*{Acknowledgements}
The first author has been supported by the National Science Center, Poland, project ``Complex contact manifolds and geometry of secants'', 2017/26/E/ST1/00231. The second author has been supported by the Polish National Science Center project number 2018/30/E/ST1/00530. The authors would like to thank an anonymous referee for his helpful comments.

\section{Preliminaries}\label{2}
The aim of this section is to set up notation, give details and references for facts given in the introduction. As a main reference, we use \cite{BL}.

Every $n$-dimensional complex torus $A$ can be written as $A=\CC^n/\Lambda$ where $\Lambda$ is a lattice of rank $2n$. The existence of an ample line bundle on $A$ is equivalent to the existence of the Riemann Form, i.e a positive definite Hermitian form on $\CC^n$
with the imaginary part being integer valued on $\Lambda\times\Lambda$. In this case there exists a basis of $\Lambda$ that forms columns of so called big period matrix:
$$[Z\ D],$$
where $Z=[z_{ij}]$ satisfies $Z=Z^{t}$ and $\Ima Z>0$ and $D=\diag(d_1,\ldots,d_n)$. When needed, we denote the columns of the big period matrix by $f_1,\ldots,f_n,d_1e_1,\ldots,d_ne_n$.
Now, the Riemann form given by a matrix $(\Ima Z)^{-1}$ (in the canonical basis $e_1,\ldots,e_n$ of $\CC^n$) can be seen as the Neron-Severi class of a line bundle of type $(d_1,\ldots,d_n)$. The matrix $Z$ is called a small period matrix and the set of these is called the Siegel upperhalf space. For details, see \cite[Sec 8.1]{BL}. 

Now, \cite[Ex. 2.6.4]{BL} shows that $NS(A)=\ZZ$ unless the coefficients of $Z$ satisfy some (quadratic) equations with integral coefficients. 
Similarly, any endomorphism $f\in\Endo(A)$ can be lifted to a linear map $\sF:\CC^n\to\CC^n$ satisfying $\sF(\Lambda)\subset\Lambda$, hence for $\QQ(z_{11},\ldots,z_{nn})$ of maximal transcendental degree over $\QQ$ one has that $\Endo(A)=\ZZ$.
A very general abelian variety is an element of the moduli space outside countably many subloci of smaller dimensions. The above argument shows the existence of an abelian variety $A$ with $NS(A)=\ZZ$ and $\End(A)=\ZZ$ and in particular we assume that a very general abelian variety satisfies these conditions.

An element $x\in A$ is called a $k$-torsion point if $kx=x+\ldots+x=0$. It can be seen as an element of the kernel of multiplication by $k$ on $A$ and in particular it lies in the image of $\frac 1k\Lambda$. Hence the set of such elements, denoted by $A[k]$, is isomorphic to $\ZZ_k^{2n}$. By $\left<x\right>$ we denote the (finite) subgroup generated by $x$.

An isogeny between abelian varieties is a surjective homomorphism with a finite kernel. There are two useful constructions of these. First, for a finite group $G$ embedded in an abelian variety $A$, we have a natural quotient isogeny $A\to A/G$. Secondly, there is so called polarising isogeny
$\varphi_{\sL}:A\to \hat{A}$ given by $\varphi_{\sL}(x)=t^*_x(\sL)\otimes \sL^{-1}$, where $t_x$ is translation by $x$. The proof that indeed $\varphi_{\sL}$ is an isogeny that does not depend on a line bundle within the polarisation can
be found in \cite[Prop 2.5.3]{BL}. It is also worth noting that if we consider $\QQ$-homomorphisms, i.e. elements of $\Hom(X,Y)\otimes\QQ$, then isogenies become invertible, see \cite[Prop 1.2.6]{BL}.

If $(A,c_1(\caL_A)), (B,c_1(\caL_B))$ are polarised abelian varieties then a polarised isogeny $f:A\to B$ is an isogeny that preserves polarisations, i.e. $c_1(f^*(\caL_B))=c_1(\caL_A)$. Having an isogeny and a polarisation on $B$, an obvious way to make it a polarised isogeny is to choose $c_1(f^*(\caL_B))$ as polarisation on $A$. Note that a pullback polarisation may not satisfy $d_1=1$.

As we have seen, a very general abelian variety is in particular simple, i.e. it does not contain any abelian subvariety apart from $\{0\}$ and itself. Now, assume that there exists an abelian subvariety $\iota:B\hookrightarrow A$.
Firstly, we can define so called restricted polarisation (that is simply choosing a polarising line bundle $\sL$ and taking a class of $\sL|_B$) with the restricted polarisation type. Note that the restricted polarisation type may satisfy $d_1>1$ and we are not allowed to change it to get $d_1=1$.

Having a homomorphism $\iota:B\to A$ one defines the dual morphism $\hat{\iota}:\hat{A}\to\hat{B}$ that after the identification $\hat{B}=Pic^0(B)$ coincides with $\iota^*$.
Now, one defines the associated symmetric idempotent to be
$$\epsilon_B=\iota\varphi_{\sL|_B}^{-1}\hat\iota\varphi_{\sL}\in\Endo(A)\otimes\QQ$$
and the Norm endomorphism $$Nm_B=e(B)\epsilon_B\in\Endo(A)$$
where $e(B)$ is, so called, exponent of $B$, i.e. the exponent of the finite group $\ker\varphi_{\sL|_B}$, see \cite[Chapter 5]{BL}. 

Since the assignment between subvarieties and associated symmetric idempotents is bijective, one can define the complementary abelian subvariety $C\subset A$ by using the idempotent $\epsilon_C=1-\epsilon_B$. In such a case we write $A=B\boxplus C$, see \cite[\S5]{BO17}. Moreover, we have the addition map $\mu:B\times C\to A$ that is a polarised isogeny between $A$ and $B\times C$ with the product polarisation, see Poincare's Reducibility Theorem \cite[Thm. 5.3.5 and Cor. 5.3.6]{BL}. 

Now, if $X=\CC^n/\Lambda_X, Y=\CC^m/\Lambda_Y$ then every homomorphism $f:X\to Y$ can be lifted to a linear map $\sF:\CC^n\to\CC^m$ satisfying $\sF(\Lambda_X)\subset\Lambda_Y$ that is called the analytic representation of $f$ and in particular, by choosing a suitable bases it can be written as an $m\times n$ matrix. That is why, the matrix of a Riemann form can also be seen as a matrix of a polarising isogeny $\phi_{\sL}$, see \cite[Lem. 2.4.5]{BL}.

Lastly, having a group $G$ and an isogeny $f:A\to A/G$ we want to be able to decide what are (the types of) polarisations on $A,A/G$ such that $f$ becomes polarised. The condition can be retrieved from \cite[Cor. 6.3.5]{BL} and in our case, when $G$ is cyclic, we only need that $G\subset \ker\varphi_{\sL}$ since cyclic groups are isotropic with respect to any symplectic form.

\section{Main result}\label{SecMain}
We start this section by recalling a useful lemma that is an easy generalisation of \cite[Prop 2.3]{BS} or follows from \cite[Prop 5.2]{BO17}.
\begin{lemma}\label{onlyE}
    Let $E$ be an elliptic curve. If $A=E\boxplus B$ with $\Hom(E,B)=0$ then the only elliptic curves isogenous to $E$ that are embedded in $A$ is $E$ itself.
\end{lemma}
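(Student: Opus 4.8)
The plan is to reduce the statement to the following assertion: any elliptic curve $C\subseteq A$ that is isogenous to $E$ must coincide with $E$ as an abelian subvariety. Since $A=E\boxplus B$, Poincar\'e reducibility gives that the addition map $\mu:E\times B\to A$ is an isogeny, and in $\Endo(A)\otimes\QQ$ we have the complementary symmetric idempotents $\epsilon_E,\epsilon_B$ with $\epsilon_E+\epsilon_B=\id$, whose images are $E$ and $B$ respectively (equivalently, the image of the genuine endomorphism $Nm_B=e(B)\epsilon_B$ is $B$). I would fix an embedding $\iota_C:C\hookrightarrow A$ and analyse its two components with respect to this decomposition.

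The crux is a vanishing of homomorphisms, and this is where the hypothesis $\Hom(E,B)=0$ enters; everything else is formal. First I would record that $\Hom(C,B)=0$: since $C$ is isogenous to $E$, there is an isogeny $\alpha:E\to C$, and for any $f\in\Hom(C,B)$ the composite $f\circ\alpha$ lies in $\Hom(E,B)=0$; as $\alpha$ is surjective this forces $f=0$. The same argument, tensored with $\QQ$, gives $\Hom(C,B)\otimes\QQ=0$. This is the only place where the precise assumption is used.

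With this in hand I would compute $\epsilon_B\circ\iota_C\in\Hom(C,A)\otimes\QQ$. Because the image of $\epsilon_B$ lands in the $\QQ$-span of $B$, this composite factors through a $\QQ$-homomorphism $C\to B$, which vanishes by the previous paragraph. Hence $\iota_C=(\epsilon_E+\epsilon_B)\circ\iota_C=\epsilon_E\circ\iota_C$ in $\Hom(C,A)\otimes\QQ$, so the image $C=\iota_C(C)$ is contained in the image of $\epsilon_E$, namely $E$. Since $C$ and $E$ are both one-dimensional abelian subvarieties of $A$ and $E$ is simple, the inclusion $C\subseteq E$ forces $C=E$, completing the proof. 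The main obstacle I anticipate is purely bookkeeping: the idempotents $\epsilon_E,\epsilon_B$ only live in $\Endo(A)\otimes\QQ$, so one must work consistently with $\QQ$-homomorphisms (or clear denominators by passing to $Nm_B$) and then translate the resulting $\QQ$-linear containment back into an honest inclusion of abelian subvarieties, which is exactly what the dimension argument provides.
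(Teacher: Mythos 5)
Your proof is correct and follows essentially the same route as the paper: the paper also kills the component towards $B$ (via $Nm_B|_F\in\Hom(F,B)=0$) and then uses the decomposition of the identity, in the integral form $e(E)e(B)\id=e(B)Nm_E+e(E)Nm_B$, to conclude the embedded curve lands inside $E$. Your use of the rational idempotents $\epsilon_E+\epsilon_B=\id$ is just the denominator-cleared version of that same identity, and your explicit reduction $\Hom(C,B)=0$ from $\Hom(E,B)=0$ via an isogeny $E\to C$ is a detail the paper leaves implicit.
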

\begin{proof}
    Let $F\subset A$ be an elliptic curve isogenous to $E$. Certainly $Nm_B|_F\in \Hom(F,B)$, so by assumptions it is equal to $0$.
    By \cite[p.125]{BL}, we have the following equality $e(E)e(B)id=e(B)Nm_E+e(E)Nm_B$, so by taking the image of both sides restricted to $F$, we get
    $F=im(Nm_E|_F)\subset E$, so $F=E$.
\end{proof}

Now, we are ready to prove the main theorem that will be applied to explicit examples in the next section.
\begin{thm}\label{mainthm}
Let $(1,d_2,\ldots,d_n)$ be a sequence of positive integers satisfying $d_i|d_{i+1}$ for $i=2,\ldots,n-1$ and with $d_n>1$.
    Let $E$ be an elliptic curve such that there exists a primitive $d_n$-torsion $P\in E[d_n]$ such that $E/\left<P\right>$ is not isomorphic to $E$. Let $(B,\sL_B)$ be a $(d_2,\ldots,d_n)$ polarised abelian variety, such that $\Hom(E,B)=0$. Let $Q\in \ker\varphi_{\sL_B}\cap B[d_n]$ be a primitive $d_n$-torsion point.
    Then the quotient $A=(E\times B)/\left<(P,Q)\right>=E\boxplus B$ is a $(1,d_2,\ldots,d_n)$ polarised abelian variety such that $\hat{A}$ is not isomorphic to $A$.    
\end{thm}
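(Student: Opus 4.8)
The plan is to argue by contradiction, resting on two complementary facts: that $A$ contains exactly one elliptic curve isogenous to $E$, namely $E$ itself, while $\hat A$ contains an elliptic curve isomorphic to $E/\langle P\rangle$, which is isogenous to $E$ but by hypothesis not isomorphic to it. A priori $A$ and $\hat A$ are isogenous (via $\varphi_{\sM}$), so any contradiction must come from isomorphism classes of embedded elliptic curves rather than from isogeny type.

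First I would justify the claims internal to the statement. Equip $E$ with the type $(d_n)$ polarisation $\sM_E=\sL_E^{\otimes d_n}$, so $\ker\varphi_{\sM_E}=E[d_n]$, and put on $E\times B$ the product polarisation $\sM_E\boxtimes\sL_B$. Since $P\in E[d_n]$ and $Q\in\ker\varphi_{\sL_B}$, the gluing point $(P,Q)$ lies in $\ker\varphi_{\sM_E\boxtimes\sL_B}$; as $\langle(P,Q)\rangle$ is cyclic, hence isotropic, the descent criterion recalled in Section~\ref{2} from \cite[Cor.~6.3.5]{BL} shows the polarisation descends to a polarisation $\sM$ on $A$. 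A degree count — descent by a cyclic group of order $d_n$ divides the degree of the polarising isogeny by $d_n^2$ — together with the elementary-divisor structure of the descended kernel $\ker\varphi_{\sM}$ yields type $(1,d_2,\ldots,d_n)$. The quotient map $E\times B\to A$ is then a polarised isogeny for the product polarisation, so by Poincaré reducibility \cite[Thm.~5.3.5]{BL} one has $A=E\boxplus B$, with $E$ and $B$ embedded and carrying restricted polarisations of types $(d_n)$ and $(d_2,\ldots,d_n)$. Because $\Hom(E,B)=0$, Lemma~\ref{onlyE} applies and delivers the first fact: the only elliptic curve in $A$ isogenous to $E$ is $E$.

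The heart of the argument, and the step I expect to be the main obstacle, is to pin down the isomorphism type of the elliptic curve inside $\hat A$. Writing $E\times B=V/\Lambda$ with $V=V_E\oplus V_B$, the variety $A$ corresponds to the overlattice $\Lambda_A=\Lambda+\mathbb Z\,\widetilde{(P,Q)}$ with $\Lambda_A/\Lambda\cong\mathbb Z_{d_n}$, so $\hat A=\bar V^{*}/\Lambda_A^{\vee}$ with $\Lambda_A^{\vee}\subset\Lambda^{\vee}=\Lambda_{\hat E}\oplus\Lambda_{\hat B}$ of index $d_n$. The elliptic curve complementary to $\hat B$ in $\hat A$ is the image of $\bar V_E^{*}$, namely $\sE=\bar V_E^{*}/(\Lambda_A^{\vee}\cap\bar V_E^{*})$. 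One computes $\Lambda_A^{\vee}\cap\bar V_E^{*}=\{\,l\in\Lambda_{\hat E}:\Ima\,l(\widetilde P_E)\in\mathbb Z\,\}$, where $\widetilde P_E\in V_E$ is the $E$-component of the lift, and recognises this as the dual lattice of $\Lambda_E+\mathbb Z\,\widetilde P_E$; hence $\sE\cong\widehat{E/\langle P\rangle}\cong E/\langle P\rangle$, using that an elliptic curve is canonically self-dual. This is exactly the dualisation of the gluing argument of \cite[\S2]{BS}: dualising turns the $E$-component $P$ of the gluing datum into the quotient $E/\langle P\rangle$. The delicate points are verifying that $\sE$ is genuinely the embedded complementary subvariety, not merely isogenous to it, and correctly identifying the dual lattice; the point $Q$ plays no role here, as expected, since it only governs the polarisation type on the $B$-side.

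Finally I would assemble the contradiction. Suppose there were an isomorphism $\phi\colon A\to\hat A$. Then $\phi^{-1}(\sE)\subset A$ would be an elliptic curve isomorphic to $E/\langle P\rangle$, in particular isogenous to $E$; by the first fact it would have to equal $E$, forcing $E/\langle P\rangle\cong E$ and contradicting the hypothesis on $P$. Therefore $\hat A$ is not isomorphic to $A$.
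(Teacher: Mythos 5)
Your proof is correct and follows essentially the same route as the paper: the polarisation on $A$ is obtained by descending the product polarisation through the isotropic cyclic group $\langle(P,Q)\rangle$, the decomposition $A=E\boxplus B$ together with Lemma~\ref{onlyE} shows that $E$ is the unique elliptic curve in $A$ isogenous to $E$, and the contradiction comes from exhibiting a copy of $E/\langle P\rangle$ inside $\hat{A}$. The only deviation is the step you flag as the main obstacle: where you identify the elliptic curve in $\hat{A}$ by an explicit dual-lattice computation, the paper gets it in one line by observing that $\ker\varphi_{\sL_A}\cap E=\langle P\rangle$, so that the restriction of the polarising isogeny $\varphi_{\sL_A}$ itself embeds $E/\langle P\rangle$ into $\hat{A}$.
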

\begin{proof}
    Let $\sL_E$ be a principal polarisation on $E$ and $\sL_B$ be a polarisation of type $(d_2,\ldots,d_n)$ on $B$. Consider $E\times B$ with the product polarisation $\sL$ being the class of the bundle $\sL_E^{\otimes d_n}\boxtimes \sL_B$ (where $\boxtimes$ means the tensor product of pullbacks of line bundles). Since the polarising isogeny is the product of polarising isogenies, its kernel is the product of kernels, so its type is $(d_2,\ldots,d_n,d_n)$. Let $P\in E,\ Q\in B$ be as in the statement of the theorem. Then $\left<(P,Q)\right>$ is an (isotropic) subgroup of $\ker\varphi_{\sL} $ of order $d_n$ on $E\times B$. 
    
    Denote by $A=E\times B/\left<(P,Q)\right>$. 
    By \cite[Prop 6.3.5]{BL}, we get that the quotient map $\pi: E\times B\to A$ is a polarised isogeny and in particular there exists a polarising line bundle $\sL_A$ of type $(1,d_2,\ldots,d_n)$ on $A$.  The type is as stated because $\ker\varphi_{\sL_A}=\pi((\ker\pi)^{\perp})$, where $\perp$ stands for the symplectic complement in $\ker\varphi_{\sL}$ (see \cite[Prop 6.3.3]{BL}).
    
    To show that $A=E\boxplus B$ one can compute associated idempotents or use the following trick. Since $\Hom(E,B)=0$, one can prove that $E$ is the only elliptic curve in $E\times B$ that is not contained in $B$, see the proof of Lemma \ref{onlyE}. Therefore, the same applies to $A$, because one can take preimages by $\pi$. In particular, $E$ is the only curve that satisfies $E+B=A$, so $E$ has to be the complementary abelian subvariety to $B$ in $A$.

    Now, we compute that $\ker\varphi_{\sL_A}\cap E=\left<P\right>$, so $\varphi_{\sL_A}(E)=E/\left<P\right>\subset \hat{A}$. 
    As stated before, by Lemma \ref{onlyE} we get that $E=\pi(E\times\{0\})$ is the only elliptic curve isogenous to $E$ that is embedded in $A$.    
    This finishes the proof because if there were an isomorphism between $\hat{A}$ and $A$, we would have got $E/\left<P\right>\subset A$ and since by assumption it is not isomorphic to $E$, we get a contradiction with Lemma \ref{onlyE}. 
\end{proof}

\begin{remark}\label{remtauoverp}
We would like to explore the condition from the main theorem that the elliptic curve $E/\left<P\right>$ is not isomorphic to $E$. 
Let $E$ be given by periods $1,\tau$ with $Im(\tau)>0$. By a possible change of periods, we can assume that $P$ is the image of a point $\frac\tau n\in\CC$, where $n>1$ is the order of $P$. Then, the curve $E/\left<P\right>$ is given by periods $1,\frac\tau n$. The existence of an isomorphism between $E$ and $E/\left<P\right>$ is equivalent to the existence of a matrix $\left[\ ^a _b\ ^c _d\right]\in Sl(2,\ZZ)$ such that 
\begin{equation}\label{sl}
    \frac{a\tau+b}{c\tau+d}=\frac{\tau}{n}.
\end{equation} 
Now, if $c=0$ then $d=\pm 1$ and the solution to the above equation is real (in fact even rational), so the solution cannot be a period. Moreover, $c\neq 0$ gives at most two solutions for possible periods $\tau$. This shows that there are (at most) countably many solutions to Equation \ref{sl} and in particular for a very general $\tau$ we have $E/\left<P\right>$ is not isomorphic to $E$.

An explicit example when $E/\left<P\right>$ is actually isomorphic to $E$ can be given by the period $\tau_E=\sqrt{-2}$ and $P=\frac{\tau_E}{2}$. Then one gets that  $\tau_{E/\left<P\right>}=\frac{\sqrt{-2}}{2}$ is an isomorphic elliptic curve because $\tau_{E/\left<P\right>}=\frac{-1}{\tau_E}$.  
    
\end{remark}

\section{Explicit examples}\label{SecExamples}
This section shows some applications of Theorem \ref{mainthm} and provides explicit examples.

For our purposes and to make things fully explicit we can choose $\tau_E=\pi i$ and $d_n>1$. Now, since we want the polarisation to be of type $d_n$ we choose the lattice to be generated by $d_n\pi i\ZZ+d_n\ZZ$. Now, choose $P=1\in E[d_n]$. Then $\tau_{E/\left<P\right>}=d_n\pi i$ because the second generator is $1$ and by Remark \ref{remtauoverp} they are not isomorphic to each other.

Now, if an elliptic curve $F$ has complex multiplication (for example $\tau_F= i$) then certainly $E,F$ are not isogenous. Now, consider $B=F^{n-1}$ with the product polarisation of type $(d_2,\ldots,d_n)$ with a $d_n$--torsion point $Q$ 
being the image of the vector $-e_n$ (note that, by construction of the period lattice for polarisation of type $(d_2,\ldots,d_n)$ one gets that $d_ne_n$ belongs to the lattice, so  the image of $-e_n$ is indeed a $d_n$-torsion point and it lies in the kernel of the polarising isogeny).

Then $E\times F^{n-1}$ can be given by a period matrix
$$\Lambda_{E\times F^{n-1}}=\begin{bmatrix}
d_n\tau_E & \cdots & \cdots&0&d_n&\ldots&\ &0\\
0 & \tau_F & 0&\vdots& 0&d_2& \ & \vdots\\
\vdots & \ & \ddots&0&0&\ &\ddots&\ \\
0 & \ldots & \ &\tau_F&0&0&\ldots &d_n\\
\end{bmatrix}. $$

\begin{example}\label{ex1} Let $(P,Q)$ be the image of $(1,0,\ldots,0,-1)$ as above. Then, an abelian variety $A$ obtained by dividing $E\times F^{n-1}$ by $\left<(P,Q)\right>$ is a $(1,d_2,...,d_n)$-polarised $n$-fold not isomorphic to its dual.
\end{example}
\begin{proof}
To get a symmetric period matrix of $A$, one needs to find a symplectic change of basis. We do this by changing the basis of $\CC^n$ to $e_1-e_n,e_2,\ldots,e_n$ and we change the basis of a lattice to $f_1,\ldots,f_{n-1},f_n+f_1, d_ne_1-d_ne_n,d_2e_2,\ldots,  d_ne_n$.

Then $(P,Q)$ is the image of $e_1-e_n$, hence one gets the following period lattice of $A$:
$$\Lambda_A=\begin{bmatrix}
d_n\tau_E & 0 & 0&d_n\tau_E&1&\ldots&\ &0\\
0 & \tau_F & 0&\vdots& 0&d_2& \ & \vdots\\
0 & \ & \ddots&0&0&\ &\ddots&\ \\
d_n\tau_E & 0 & 0 &d_n\tau_E+\tau_F&0&0&\ldots &d_n\\
\end{bmatrix}, $$
with an elliptic curve $E$ embedded via the analytic representation of the form  $z\mapsto (z,0\ldots,0,z)$. Certainly $\Hom(E,F^{n-1})=0$ because $E,F$ are non-isogenous, so by Theorem \ref{mainthm}, $A=\CC^n/\Lambda_A$ satisfies the claim.
\end{proof}
\begin{remark}
    Note that the Picard number of $A$ from Example \ref{ex1} attains the submaximal value $(n-1)^2+1$, see \cite[Thm. 4.2]{HL}.  
\end{remark}
\begin{example}\label{ex2}
    Let $E$ be an elliptic curve with a period $d_n,\ d_n\tau_E$ and $P\in E[d_n]$ be the image of $1\in\CC$. Let $B$ be a very general abelian $n-1$ fold of type $(d_2,\ldots,d_n)$ and $Q$ any $d_n$ torsion point in the kernel of the polarising isogeny. Then $A=E\times B/\left<(P,Q)\right>$ is a $(1,d_2,\ldots,d_n)$ polarised abelian variety such that $\hat{A}$ is not isomorphic to $A$.  
\end{example}
\begin{proof}
 $B$ is simple so $\Hom(E,B)=0$, hence the assumptions of Theorem \ref{mainthm} are satisfied. In a similar manner as in Example \ref{ex1}, denoting a small period matrix of $B$ by $[b_{i,j}]_{i,j=1,\ldots,n-1}$ and chosen so that $Q$ is the image of $e_n$ one finds a period matrix of $A$ to be: 
$$\Lambda_A=\begin{bmatrix}
d_n\tau_E & 0 & 0&d_n\tau_E&1&\ldots&\ &0\\
0 & b_{1,1} & \cdots&b_{1,n-1}& 0&d_2& \ & \vdots\\
0 & \vdots & \ddots&\vdots&0&0&\ddots&\ \\
d_n\tau_E & b_{n-1,1} & \ldots &d_n\tau_E+b_{n-1,n-1}&0&0&\ldots &d_n\\
\end{bmatrix}. $$
\end{proof}
\begin{remark}
    Note that the Picard number of $A$ from Example \ref{ex2} equals 2. 
\end{remark}

\begin{remark}
We have shown two families of examples that follows from Theorem \ref{mainthm}. However, it may be worth noting that for any abelian variety $B$ one can construct an abelian variety $A$ not isomorphic to its dual that contains $B$ as an abelian subvariety of codimension 1. To see this, one uses the fact that there are at most countably many elliptic curves embedded in $B$ (see \cite[Ex. 1.5.1]{BL}), hence at most countably many elliptic curves that satisfy $\Hom(E,B)\neq 0$. In particular there exists an elliptic curve $E$ such that $\Hom(E,B)=0$ and if $B$ is principally polarised, one uses $d_n$ times the principal polarisation to get $d_n>1$. Then the assumptions of Theorem \ref{mainthm} are satisfied and one can adjust the construction of Example \ref{ex2} to get a period matrix of the variety $A$.
\end{remark}

We can also apply the main result to the case of very general abelian varieties. We will show the following theorem:
\begin{theorem}\label{vgthm}
    Let $A$ be a very general non-principally polarised abelian variety. Then $A$ is not isomorphic to its dual $\hat{A}$.
\end{theorem}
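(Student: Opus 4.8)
The plan is to reduce the abstract (non-polarised) question to a question about \emph{polarised} isomorphisms, where the dualisation map on the moduli space becomes computable. The key observation is that for a very general abelian variety $A$ we have $NS(A)=\ZZ$, so the polarisation class generates the whole N\'eron--Severi group up to sign. Consequently, if $\psi:A\to\hat A$ is \emph{any} abstract isomorphism, then $\psi^*$ sends the natural polarisation $c_1(\caL_{\hat A})$ on $\hat A$ to some class in $NS(A)=\ZZ\cdot c_1(\caL_A)$; since $\psi$ is an isomorphism this pullback class must again be ample and primitive in the same sense, so it equals $\pm c_1(\caL_A)$ (the sign being fixed by ampleness). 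Hence an abstract isomorphism automatically becomes a \emph{polarised} isomorphism $A\cong\hat A$. This is the first step: I would state and prove the implication ``$NS(A)=\ZZ$ and $A\cong\hat A$ abstractly'' $\Longrightarrow$ ``$A\cong\hat A$ as polarised abelian varieties''.

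The second step is to show that a polarised isomorphism $A\cong\hat A$ cannot exist for a \emph{very general} non-principally polarised $A$. Here I would work on the level of period matrices. Recall that a $(d_1,\dots,d_n)$-polarised variety $A=\CC^n/[Z\ D]$ has dual $\hat A$ with an explicit natural polarisation of dual type $(\tfrac{d_1 d_n}{d_n},\dots,\tfrac{d_1 d_n}{d_1})$, as recalled in the introduction via \cite[Rem 5.2.9]{BL}. A polarised isomorphism must in particular preserve the polarisation \emph{type}. When the polarisation is non-principal, the type $(d_1,\dots,d_n)$ and the dual type $(\tfrac{d_1d_n}{d_n},\dots,\tfrac{d_1d_n}{d_1})$ are in general \emph{different} sequences; in the simplest case $(1,d)$ with $d>1$ the dual type is $(1,d)$ again, so the type alone does not obstruct, and one really must use the genericity of $Z$.

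The third and main step — and the hardest part — is therefore to show that, even when the types coincide, dualisation is not the identity on the moduli space, so a very general $Z$ is not carried to an isomorphic small period matrix by the dualising operation. Concretely, I would write down the small period matrix $\hat Z$ of $\hat A$ as an explicit $Sp$-type transform of $Z$ (using \cite[Lem. 2.4.5]{BL} to identify the polarising isogeny matrix with the Riemann form $(\Ima Z)^{-1}$), and then observe that a polarised isomorphism $A\cong\hat A$ amounts to the existence of $M$ in the relevant arithmetic group with $M\cdot Z = \hat Z$ in the sense of the action on the Siegel space. This defines a \emph{proper} analytic subvariety of the Siegel upper half space (a countable union of such, as the arithmetic group is countable and each fixed-point-type condition is a nontrivial algebraic condition once we check dualisation is not globally the identity). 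A very general $A$ lies outside this countable union, giving the claim. The delicate point is verifying that the dualisation transform is genuinely non-trivial, i.e.\ that the locus of $Z$ admitting such an $M$ is \emph{proper}; I expect to handle this by exhibiting a single non-principal $Z$ for which no polarised isomorphism $A\cong\hat A$ exists (for instance by a type or endomorphism obstruction in the spirit of the introduction's Theorem~\ref{mainthm} examples), which already forces the locus to be a proper subvariety and hence measure zero.

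I would finally remark that the surface case $n=2$ is exactly \cite[Thm 1.5]{GH}, so the content here is the extension to arbitrary dimension; the argument above treats all $n$ uniformly since it only uses $NS(A)=\ZZ$ together with the genericity of the period point.
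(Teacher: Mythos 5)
Your proposal is correct and follows essentially the same route as the paper's own proof: reduce an abstract isomorphism to a polarised one via $NS(A)=\ZZ$ (pullback of the primitive ample generator is again the primitive ample generator), dispose of the case $D\neq\hat{D}$ by the type obstruction, and when $D=\hat{D}$ use the explicit examples of Theorem~\ref{mainthm} to see that dualisation is not the identity on the moduli space, so its fixed locus is proper and a very general $A$ avoids it. The only difference is cosmetic: you phrase the genericity step on the Siegel upper half space as a countable union of proper analytic subvarieties indexed by the arithmetic group, while the paper phrases it as the fixed locus of the non-identity morphism $\Psi:\cA_g^D\to\cA_g^{\hat{D}}$.
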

\begin{proof}
    Let $\Psi:\cA_g^D\to\cA_g^{\hat{D}}$ be the morphism that takes an abelian variety to its dual and let $A\in\cA_g^D$ be of Picard rank 1.
    If $D$ is not equal to $\hat{D}$ then certainly $\hat{A}$ is not polarised isomorphic to $A$.
    If $D=\hat{D}$ then, by Theorem \ref{mainthm}, we see that $\Psi$ is not the identity morphism, hence, by the very definition of the moduli space, there exists an open subset of $\cA_g^D$ where $\hat{A}$ is not polarised isomorphic to $A$.

    Now, we are ready to use the same trick as in the introduction. Let $(A,\sL_A)$ be a polarised abelian variety with Picard rank 1. Since $\hat{A}$ is isogenous to $A$, we know that its Picard rank is 1 too, given by $H_{\hat{A}}$. If there was an abstract isomorphism $f:\hat{A}\to A$ then by choosing a polarisation $f^*(H_A)$ on $\hat{A}$ one would make $f$ a polarised isomorphism between   $(A,H_A)$ and $(\hat{A},f^*{H_A})$. Because $A$ has Picard rank 1, we know that $f^*H_{A}$ is a multiple of $H_{\hat{A}}$ and since $f$ was an isomorphism, their degrees coincide, so they are equal. This shows that in particular, $f$ was a polarised isomorphism, which is a contradiction with our set-up. 
\end{proof}

\section{Dual non principally polarisable abelian varieties}\label{1313}
The fact that \textit{if an abelian variety is principally polarised then it is isomorphic to its dual} is widely used. 
In this section we will show that the converse implication is not true. In particular we show an explicit example of a non-principally polarisable abelian variety which is isomorphic to its dual. 

Before doing so, we need to introduce some notation and preliminary results.
We start with the following characterisation of polarising isogenies:
\begin{lemma}\cite[Thm 2.5.5]{BL}
   Suppose $A=\CC^n/\Lambda_A$ and $f:A\to\hat{A}$ with the analytic representation $\mathcal{F}$. 
   Then $f=\varphi_{\sL}$ for some line bundle $\sL\in Pic(A)$ if and only if $\sF$ is Hermitian.
  
\end{lemma}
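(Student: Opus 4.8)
The plan is to reduce the statement to the Appell--Humbert description of line bundles on $A$ together with the explicit formula for the analytic representation of a polarising isogeny. First I would fix the standard identification of the dual torus $\hat{A}=\Pic^0(A)$ with $\bar{\Omega}/\hat{\Lambda}$, where $\bar{\Omega}$ denotes the space of $\CC$-antilinear functionals on $\CC^n$ and $\hat{\Lambda}=\{\,l\in\bar{\Omega}:\Ima l(\Lambda_A)\subset\ZZ\,\}$ is the dual lattice (see \cite[Sec. 2.4]{BL}). Under this identification, the analytic representation $\sF\colon\CC^n\to\bar{\Omega}$ of $f$ is the same datum as the sesquilinear form $H_{\sF}(v,w):=\sF(v)(w)$ on $\CC^n$, and the assertion that $\sF$ is Hermitian means precisely that $H_{\sF}(v,w)=\overline{H_{\sF}(w,v)}$.

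With this dictionary in place, both implications follow by matching analytic representations. Since $f$ is a homomorphism of tori, $\sF$ must carry $\Lambda_A$ into $\hat{\Lambda}$, which is exactly the condition that $\Ima H_{\sF}$ take integer values on $\Lambda_A\times\Lambda_A$. For the forward direction I would invoke the explicit description of $\varphi_{\sL}$ (\cite[Lem. 2.4.5]{BL}): its analytic representation is $v\mapsto H(v,-)$, where $H$ is the Hermitian form attached to $\sL$ by Appell--Humbert. Hence if $f=\varphi_{\sL}$ then $H_{\sF}=H$ is Hermitian.

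For the converse, suppose $\sF$ is Hermitian, so $H_{\sF}$ is a Hermitian form whose imaginary part is integral on $\Lambda_A$. By the Appell--Humbert theorem I can choose a semicharacter $\chi$ compatible with $H_{\sF}$ and form the line bundle $\sL=L(H_{\sF},\chi)$; its existence is guaranteed precisely because $\Ima H_{\sF}$ is integer-valued on the lattice. By the formula above, $\varphi_{\sL}$ then has analytic representation $H_{\sF}$, the same as that of $f$. Since a homomorphism of complex tori is determined by its analytic representation (its differential at the origin), I conclude $f=\varphi_{\sL}$.

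The main obstacle is bookkeeping rather than conceptual: one must fix the sign and conjugation conventions so that the identification of $\hat{A}$, the notion of a Hermitian analytic representation, and the explicit formula for $\varphi_{\sL}$ are all mutually consistent, so that the two sesquilinear forms match exactly. The only genuine input beyond this translation is the existence of a semicharacter for a given Hermitian form with integral imaginary part, which is the standard ingredient of the Appell--Humbert theorem.
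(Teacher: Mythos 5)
Your proof is correct. The paper does not prove this lemma at all---it simply quotes it with the citation \cite[Thm 2.5.5]{BL}---and your argument (identifying $\hat{A}$ with $\bar{\Omega}/\hat{\Lambda}$, reading the analytic representation $\sF$ as the sesquilinear form $H_{\sF}(v,w)=\sF(v)(w)$, using the formula of \cite[Lem. 2.4.5]{BL} for the analytic representation of $\varphi_{\sL}$, and invoking the existence of a semicharacter for any Hermitian form whose imaginary part is integral on the lattice) is essentially the standard Appell--Humbert proof given in that reference, so it matches the intended justification.
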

\begin{corollary}
In the previous theorem, we have that $f$ is polarising isogeny if and only if $\sF$ is positive definite Hermitian form.
\end{corollary}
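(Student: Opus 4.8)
The plan is to upgrade the previous lemma by pinning down which line bundles $\sL$ give rise to \emph{polarising} isogenies rather than to arbitrary homomorphisms $\varphi_{\sL}$. By definition a polarising isogeny is $\varphi_{\sL}$ for a line bundle $\sL$ whose first Chern class is a polarisation, i.e.\ for an ample $\sL$; so the task reduces to showing that, among the Hermitian analytic representations produced by the lemma, exactly the positive definite ones correspond to ample $\sL$.

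First I would invoke the previous lemma to get that $f=\varphi_{\sL}$ for some $\sL\in\Pic(A)$ if and only if $\sF$ is Hermitian, so I may assume $\sF$ is Hermitian throughout and only need to track when $\sL$ is ample. The key input is the identification, recorded in the preliminaries (see \cite[Lem.\ 2.4.5]{BL}), of the analytic representation $\sF$ of $\varphi_{\sL}$ with the Hermitian form $H$ associated to $\sL$; the integrality of $\Ima\sF$ on $\Lambda_A$ is automatic since $f$ is a homomorphism, so that $\sF(\Lambda_A)\subset\hat{\Lambda}_A$. Under this identification $\sL$ is ample precisely when its Riemann form $H=\sF$ is positive definite, which is exactly the characterisation of the existence of an ample line bundle recalled at the start of Section~\ref{2}.

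It then remains to combine the two directions. If $f$ is a polarising isogeny then $\sL$ is ample, so $\sF=H$ is a positive definite Hermitian form. Conversely, if $\sF$ is positive definite Hermitian then by the lemma $f=\varphi_{\sL}$, and positive definiteness makes the corresponding $H$ a Riemann form, so $\sL$ is ample and $c_1(\sL)$ is a polarisation; note that positive definiteness also forces $\sF$ to be non-degenerate, so that $\varphi_{\sL}$ automatically has finite kernel and is indeed an isogeny. The only real subtlety is bookkeeping: matching the abstract notion of a polarising isogeny with ampleness of $\sL$, and confirming that the analytic representation $\sF$ furnished by the lemma is literally the Hermitian form whose positive definiteness governs ampleness. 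Once this identification is in place the equivalence is immediate.
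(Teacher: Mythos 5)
Your proof is correct and follows exactly the route the paper intends: the corollary is stated there without proof, being treated as immediate from the lemma combined with the fact (recalled at the start of Section 2) that an ample line bundle corresponds precisely to a positive definite Hermitian Riemann form, which is exactly the identification you make via \cite[Lem.\ 2.4.5]{BL}. Your added observation that positive definiteness forces $\sF$ to be non-degenerate, so that $\varphi_{\sL}$ has finite kernel and is genuinely an isogeny, correctly handles the only point that needs checking beyond the lemma.
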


Let 
$S$ be a $(1,3)$ polarised abelian surface given by a period matrix
$$L_S=\begin{bmatrix}
a & b & 1&0\\
b & c & 0&3\\
\end{bmatrix},$$
where $a,b,c\in\CC$ are algebraically independent over $\QQ$ and such that the imaginary part of the small period matrix is positive definite. This assures that $S$ is a very general abelian surface.

Note that by \cite[\S 2.4 p.34]{BL} the dual lattice to $\Lambda$ is defined as $\{l\in Hom_{\bar{\CC}}(\CC^2,\CC): \forall x\in\Lambda,\  Im(l(x))\in\ZZ\}$. Using the dual basis to the canonical one, the dual lattice is generated by the vectors: 
$$\begin{bmatrix}
a & \frac{b}{3} & 1&0\\
b & \frac{c}3 & 0&1\\
\end{bmatrix},$$
To get a symmetric small period matrix, we divide the first canonical vector basis by $3$ to get the big period lattice to be of the form 
$$L_{\hat{S}}=\begin{bmatrix}
3a & b & 3&0\\
b & \frac{c}3 & 0&1\\
\end{bmatrix}.$$
In principle, we should switch coordinates to get a type $(1,3)$ but since it is just a matter of convention we will leave the coordinates as they are. Note that $\widehat{\widehat{S}}=S$ and our procedure yields $L_{\hat{\hat{S}}}=L_S$.
\begin{example}\label{ex3}
Let $A=S\times \hat{S}$ with a big period lattice given by the matrix:
$$
L_A=\begin{bmatrix}
a & b & 0&0&1&0&0&0\\
b & c & 0&0&0&3&0&0\\
0 & 0 & 3a&b&0&0&3&0\\
0 & 0 & b&\frac{c}{3}&0&0&0&1\\
\end{bmatrix}. $$
Then $A$ is isomorphic to $\hat{A}$ but does not admit a principal polarisation.    
\end{example}
\begin{proof}
    Certainly $\hat{A}$ is isomorphic to $A$ because $$\hat{A}=\widehat{S\times\hat{S}}\simeq\hat{S}\times\hat{\hat{S}}\simeq\hat{S}\times S\simeq S\times\hat{S}=A$$

The fact that there doesn't exist a principal polarisation on $A$ follows from Lemma \ref{54}.
\end{proof}
\begin{lemma}\label{54}
There does not exist a $4\times 4$ matrix $H$ of a Hermitian positive definite form such that the lattice generated by the columns of $HL_A$ coincide with the lattice generated by the columns of the matrix $$L_{\hat{A}}=\begin{bmatrix}
3a & b & 0&0&3&0&0&0\\
b & \frac{c}{3} & 0&0&0&1&0&0\\
0 & 0 & a&b&0&0&1&0\\
0 & 0 & b&c&0&0&0&3\\
\end{bmatrix}.$$
\end{lemma}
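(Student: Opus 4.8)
The plan is to translate the lattice condition into a matrix equation and then funnel everything into a single impossible congruence. Coincidence of the two lattices means precisely that $HL_A=L_{\hat A}M$ for some $M\in GL_8(\ZZ)$, so I assume such a pair $(H,M)$ exists, with $H$ positive definite Hermitian, and aim for a contradiction. (By the Corollary above, this is exactly the data of a principal polarisation: $H$ is then the analytic representation of a polarising isogeny $\varphi_{\sL}\colon A\to\hat A$, which is an isomorphism iff $\sL$ is principal, which is why the lemma yields Example \ref{ex3}.)

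First I would force $H$ to be rational and constant. Writing $L_A=[Z_A\mid D_A]$ and $L_{\hat A}=[Z_{\hat A}\mid D_{\hat A}]$ with $D_A=\diag(1,3,3,1)$, $D_{\hat A}=\diag(3,1,1,3)$, and splitting $M$ into four integer $4\times4$ blocks $P,Q,R,T$, the columns coming from $D_A$ give $HD_A=Z_{\hat A}Q+D_{\hat A}T$, which exhibits every entry of $H$ as a $\QQ$-linear combination of $1,a,b,c$. Imposing $H=\bar H^{t}$ and using that for a very general $S$ the complex numbers $1,a,b,c,\bar a,\bar b,\bar c$ are linearly independent over $\QQ$, the $a,b,c$-coefficients are killed, so $H$ is a constant symmetric matrix with rational entries.

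With $H$ constant, the block equations read $HZ_A=Z_{\hat A}P+D_{\hat A}R$ and $HD_A=Z_{\hat A}Q+D_{\hat A}T$; comparing the vanishing constant parts with the degree-one parts in $a,b,c$ forces $Q=R=0$, so $M$ is block diagonal. Since $Z_A=\diag(W,W')$ and $Z_{\hat A}=\diag(W',W)$ are built from $W=\left(\begin{smallmatrix}a&b\\ b&c\end{smallmatrix}\right)$ and $W'=\left(\begin{smallmatrix}3a&b\\ b&c/3\end{smallmatrix}\right)$, the surviving relations decouple into four $2\times2$ identities $H_\bullet W^{(\prime)}=W^{(\prime)}P_\bullet$. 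Matching coefficients of the algebraically independent $a,b,c$ collapses all freedom to three integers $k,m,\ell$: the matrix $H$ has diagonal $2\times2$ blocks $\left(\begin{smallmatrix}3k&0\\0&k\end{smallmatrix}\right)$ and $\left(\begin{smallmatrix}m&0\\0&3m\end{smallmatrix}\right)$ and off-diagonal block $\ell\,\Id$, and each of the two diagonal blocks of $M$ has determinant $(3km-\ell^{2})^{2}$.

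The hard part, and the genuine content, is the final arithmetic. Unimodularity $M\in GL_8(\ZZ)$ forces $(3km-\ell^{2})^{4}=1$, hence $3km-\ell^{2}=\pm1$; meanwhile, after reordering coordinates the two blocks of $H$ become $\left(\begin{smallmatrix}3k&\ell\\ \ell&m\end{smallmatrix}\right)$ and $\left(\begin{smallmatrix}k&\ell\\ \ell&3m\end{smallmatrix}\right)$, so positive definiteness gives $k,m>0$ and $3km-\ell^{2}>0$, leaving only $3km-\ell^{2}=1$. But this forces $\ell^{2}\equiv-1\pmod 3$, which is impossible since $-1$ is not a square modulo $3$. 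This contradiction proves the lemma. The mod $3$ obstruction is precisely the shadow of the polarisation type $(1,3)$; isolating it is where I expect the only real subtlety to lie, together with the appeal to very generality used to linearise $H$.
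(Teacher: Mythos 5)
Your overall strategy mirrors the paper's — pin $H$ down to the shape with diagonal blocks $\diag(3k,k)$, $\diag(m,3m)$ and off-diagonal block $\ell\,\Id$, extract $3km-\ell^{2}=1$, and contradict it modulo $3$ — but there is one genuine gap, in the linearisation step. You force $H$ to be constant by writing its entries as $\QQ$-affine expressions in $a,b,c$ (via $HD_A=Z_{\hat{A}}Q+D_{\hat{A}}T$) and then invoking $H=\bar{H}^{t}$ together with the $\QQ$-linear independence of $1,a,b,c,\bar{a},\bar{b},\bar{c}$. That independence is \emph{not} implied by the hypotheses under which $L_A$ is defined and Lemma \ref{54} is stated: the paper assumes only that $a,b,c$ are algebraically independent over $\QQ$ and that $\Ima Z$ is positive definite. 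Take $a=ix$, $b=iy$, $c=iz$ with $x,y,z$ real and algebraically independent over $\QQ$, $x>0$, $xz-y^{2}>0$ (such triples exist in abundance); then $a,b,c$ satisfy all the stated hypotheses, yet $a+\bar{a}=0$, so your independence claim fails, and Hermitian symmetry no longer kills the $a,b,c$-coefficients (for instance $h_{12}=a$, $h_{21}=\bar{a}=-a$ is perfectly Hermitian and affine in $a,b,c$). As written, your argument therefore proves the non-existence statement only for a strictly smaller class of surfaces than the one the lemma (and Example \ref{ex3}, which quotes it) is about.

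The gap is local and reparable inside your own framework, and the repair is essentially what the paper does: instead of appealing to conjugates, substitute the affine expressions for the entries of $H$ into the other block equation $HZ_A=Z_{\hat{A}}P+D_{\hat{A}}R$. The left-hand side then acquires quadratic monomials $a^{2},ab,ac,b^{2},bc,\dots$, and algebraic independence of $a,b,c$ alone forces all degree-one coefficients in the entries of $H$ to vanish, so $H$ is constant (indeed integral); Hermitian-plus-real then gives the symmetry you use. The paper runs exactly this kind of comparison row by row on $HL_A$ to get integrality, and uses Hermitianness only for symmetry. From that point on your proof is correct, and its endgame is actually cleaner than the paper's: where the paper reaches $3km-h^{2}=1$ through explicit coprimality relations among particular columns of $HL_A$ followed by a principal-minor and sign analysis, you get $(3km-\ell^{2})^{4}=\det M=1$ at once from unimodularity of $M\in GL_8(\ZZ)$, and the sign from positive definiteness of $H$; the final contradiction ($\ell^{2}\equiv -1 \pmod 3$ being impossible) is identical to the paper's.
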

\begin{proof}
Let us assume that such $H$ exists.
By assumptions, $H$ is Hermitian positive definite matrix and let $h_{ij}$ be its entries. Firstly, we want to show that $H$ has integer entries, hence it is a positive defined symmetric matrix over $\mathbb{Z}$. To see that, consider the first row of the matrix $HL_A$. All its entries, in particular $h_{11}, 3h_{12},3h_{13}, h_{14}$ 
need to be integer combination of $3a,b,3$ which are algebraically independent. Combining this with the fact that the remaining entries of the first row, namely $h_{11}a+h_{12}b, h_{11}b+h_{12}c, 3h_{13}a+h_{14}b, h_{13}b+h_{14}\frac{c}{3}$ are also integer combination of $3a,b,3$ we obtain the claim. 

Now, again analyzing the first row and taking into account that $c$ is algebraically independent from $3a,b,3$, we immediately see that $h_{12}=h_{14}=0$. Moreover, $h_{11}$ must be a multiple of $3$ and since $H$ is positive definite, we get that $h_{11}=3k, k\in\ZZ_+$. Repeating analogous argument with the remaining rows and including the symmetry of $H$ (that comes from the fact $H$ is Hermitian with real coordinates) we get that 
$$H=\begin{bmatrix}
3k & 0 & h_{31}&0\\
0& h_{22} & 0&h_{24}\\
h_{13} & 0 & h_{33}&0\\
0 & h_{24} & 0&3m\\
\end{bmatrix}.$$

Now we will use the fact that the columns of $HL_A$ generate over $\mathbb{Z}$ columns of $L_{\hat{A}}$. In particular there exist two non-zero integer pairs $(c_1, c_2)$ and $(d_1,d_2)$ such that 

$$c_1\begin{bmatrix}
3ka\\
h_{22}b\\
h_{13}a\\
h_{24}b \\
\end{bmatrix} +c_2\begin{bmatrix}
3h_{13}a\\
h_{24}b\\
3h_{33}a\\
3mb \\
\end{bmatrix} =\begin{bmatrix}
3a\\
b\\
0\\
0\\
\end{bmatrix}\ \text{ and }\ d_1\begin{bmatrix}
3ka\\
h_{22}b\\
h_{13}a\\
h_{24}b \\
\end{bmatrix} +d_2\begin{bmatrix}
3h_{13}a\\
h_{24}b\\
3h_{33}a\\
3mb \\
\end{bmatrix} =\begin{bmatrix}
0\\
0\\
a\\
b\\
\end{bmatrix}.$$

Fixing the matrix coefficients and treating $(c_1, c_2)$ and $(d_1,d_2)$ as integer variables, from each of these equations we get a system of two homogeneous linear equations. By assumption, these systems have non-zero solutions, therefore, from the first of them we get $(h_{13},3h_{33})=\alpha (h_{24},3m)$, for $\alpha\in\QQ\setminus\{0\}$. Substituting this to $d_1h_{13}+3d_2h_{33}=1$ and using that $d_1h_{24}+3d_2m=1$ we get that $\alpha=1$. Similarly from the second system of equations we get
$ (k,h_{13})= (h_{22}, h_{24})$ and therefore $$H=\begin{bmatrix}
3k & 0 & h&0\\
0& k & 0&h\\
h & 0 & m&0\\
0 & h & 0&3m\\
\end{bmatrix}.$$

To finish the proof we need to consider the condition that the columns of $HL_A$ generate over $\mathbb{Z}$ the last four columns of $L_{\hat{A}}$. A non-zero rational coefficient in the first and the third row appears only in the 5th and the 7th column of $HL_A$ (the coefficients in this case are in fact integers). Hence there exist integer pairs $(c_1',c_2')$ and $(d_1',d_2')$ such that

$$c_1'\begin{bmatrix}
3k\\
0\\
h\\
0\\
\end{bmatrix} +c_2'\begin{bmatrix}
3h\\
0\\
3m\\
0 \\
\end{bmatrix} =\begin{bmatrix}
3\\
0\\
0\\
0\\
\end{bmatrix}\ \text{ and }\ d_1'\begin{bmatrix}
3k\\
0\\
h\\
0\\
\end{bmatrix} +d_2'\begin{bmatrix}
3h\\
0\\
3m\\
0 \\
\end{bmatrix} =\begin{bmatrix}
0\\
0\\
1\\
0\\
\end{bmatrix}.$$

From equations $d_1'h+3d_2'm=1$ and $c_1'k+c_2'h=1$ we see that $d_1',d_2'$ are coprime as well as $k,h$. Hence the equation $d_1'k+d_2'h=0$ rephrased as $d_1'k=-d_2'h$ implies that $(d_1',d_2')=\pm(h,-k)$.

Analyzing principal minors of the positive definite matrix $H$ we get that $k>0$ and $3k^2m-h^2k>0$ hence $3km-h^2>0$ and by $d_1'h+3d_2'm=1>0$ we get that $(d_1',d_2')=(-h,k)$. Finally this implies that $3km-h^2=1$, but $h^2\equiv 0 \text{ or } h^2\equiv 1(\mod 3)$ which gives a contradiction.
\end{proof}
\begin{remark}
    The above lemma is also correct for $(1,d)$, where $d$ is a number such that $-1$ is not a square in $\mathbb{Z}_d$. It is worth to note that a similar argument can be used to show that $A^4\times\hat{A}^4$ is always principally polarisable (for any abelian variety $A$). It is commonly known as a Zarhin's trick, see \cite[Thm 11.29]{vdg} or \cite{Z}.   
\end{remark}


\begin{thebibliography}{9999999}
\bibitem{BL} C. Birkenhake, H. Lange, \textit{Complex Abelian Varieties. Second edition},  Grundlehren der mathematischen Wissenschaften, 302. Springer-Verlag, Berlin, 2004. xii+635 pp. ISBN: 3-540-20488-1; 
MR2062673 

\bibitem{BO17} P. Borówka, A. Ortega, \textit{ Hyperelliptic curves on (1,4)-abelian surfaces}, Math. Z., (292):193–209, 2018

\bibitem{BS} P. Bor\'owka, A. Shatsila, \textit{Hyperelliptic genus 3 curves with involutions and a Prym map }, preprint arXiv:2308.07038.

\bibitem{vdg} B. Edixhoven, G. van der Geer, B. Moonen. \textit{Abelian
varieties. Preliminary version of the first
chapters.} available online at: http://van-der-geer.nl/~gerard/AV.pdf

\bibitem{GH} V. Gritshenko, K. Hulek, \textit{Minimal Siegel modular threefolds}, Vol. 123, Issue 3, (1998),  461--485

\bibitem{HL} K. Hulek, R. Laface \textit{On the Picard numbers of abelian varieties}, Ann. Sc. Norm. Super. Pisa Cl. Sci. (5)
Vol. XIX (2019), 1199--1224


\bibitem{Z} J. Zarhin,  \textit{A remark on endomorphisms of abelian varieties over function fields of finite characteristic}, Mathematics of the USSR-Izvestiya Vol 8 Issue 3, (1974), 477-–480
\end{thebibliography}
\end{document}